\begin{document}
\newtheorem{lem}{Lemma}[section]
\newtheorem{prop}{Proposition}[section]
\newtheorem{cor}{Corollary}[section]
\numberwithin{equation}{section}
\newtheorem{thm}{Theorem}[section]

\theoremstyle{remark}
\newtheorem{example}{Example}[section]
\newtheorem*{ack}{Acknowledgments}

\theoremstyle{definition}
\newtheorem{definition}{Definition}[section]

\theoremstyle{remark}
\newtheorem*{notation}{Notation}
\theoremstyle{remark}
\newtheorem{remark}{Remark}[section]

\newenvironment{Abstract}
{\begin{center}\textbf{\footnotesize{Abstract}}%
\end{center} \begin{quote}\begin{footnotesize}}
{\end{footnotesize}\end{quote}\bigskip}
\newenvironment{nome}

{\begin{center}\textbf{{}}%
\end{center} \begin{quote}\end{quote}\bigskip}

\newcommand{\triple}[1]{{|\!|\!|#1|\!|\!|}}

\newcommand{\xx}{\langle x\rangle}
\newcommand{\ep}{\varepsilon}
\newcommand{\al}{\alpha}
\newcommand{\be}{\beta}
\newcommand{\de}{\partial}
\newcommand{\la}{\lambda}
\newcommand{\La}{\Lambda}
\newcommand{\ga}{\gamma}
\newcommand{\del}{\delta}
\newcommand{\Del}{\Delta}
\newcommand{\sig}{\sigma}
\newcommand{\ome}{\omega}
\newcommand{\Ome}{\Omega}
\newcommand{\C}{{\mathbb C}}
\newcommand{\N}{{\mathbb N}}
\newcommand{\Z}{{\mathbb Z}}
\newcommand{\R}{{\mathbb R}}
\newcommand{\T}{{\mathbb T}}
\newcommand{\Rn}{{\mathbb R}^{n}}
\newcommand{\Rnu}{{\mathbb R}^{n+1}_{+}}
\newcommand{\Cn}{{\mathbb C}^{n}}
\newcommand{\spt}{\,\mathrm{supp}\,}
\newcommand{\Lin}{\mathcal{L}}
\newcommand{\SSS}{\mathcal{S}}
\newcommand{\F}{\mathcal{F}}
\newcommand{\xxi}{\langle\xi\rangle}
\newcommand{\eei}{\langle\eta\rangle}
\newcommand{\xei}{\langle\xi-\eta\rangle}
\newcommand{\yy}{\langle y\rangle}
\newcommand{\dint}{\int\!\!\int}
\newcommand{\hatp}{\widehat\psi}
\renewcommand{\Re}{\;\mathrm{Re}\;}
\renewcommand{\Im}{\;\mathrm{Im}\;}

\title
[NLS on product spaces]
{Small data scattering for the nonlinear Schr\"odinger equation on product spaces}
\author[Nikolay Tzvetkov]{Nikolay~Tzvetkov}
\author[Nicola Visciglia]{Nicola Visciglia}
\address{D\'epartement de Math\'ematiques, Universit\'e de Cergy-Pontoise, 2, avenue Adolphe Chauvin, 95302 Cergy-Pontoise  
Cedex, France and Institut Universitaire de France}\email{nikolay.tzvetkov@u-cergy.fr}
\address{Universit\`a Degli Studi di Pisa Dipartimento di Matematica "L. Tonelli"
Largo Bruno Pontecorvo 5 I - 56127 Pisa. Italy}\email{ viscigli@dm.unipi.it}
\maketitle
\begin{abstract}
We consider the cubic nonlinear Schr\"odinger equation, posed on $\R^n\times M$, where $M$ is a compact Riemannian manifold and $n\geq 2$.
We prove that under a suitable smallness in Sobolev spaces condition on the data there exists a unique global solution which scatters to a free solution for 
large times. 
\end{abstract}

\section{Introduction}
The Cauchy problem for the nonlinear Schr\"odinger equation (NLS), posed on a compact Riemannian manifold attracted a considerable attention 
(see in particular \cite{Bo, BGT}).  In all these works the global existence is based on the combination of  (low regularity) 
well-posedness and conservation laws. In such a situation there is
few control on the global dynamics and in particular there is no reason to believe that the solution of the nonlinear problem
is close to the solution of the linear problem for large times, even for small data. In other words scattering is not expected.

On the other hand the Cauchy problem for the nonlinear Schr\"odinger equation, posed on the Euclidean space $\R^n$ is  better understood 
(see for instance \cite{CW, GV,T}).
In particular for small data one expects that the nonlinear evolution is close to the linear one, at least for small data and sufficiently small (near zero) nonlinearity, see for instance \cite{CW}.  An important tool in the proof of such type of results  are the {\it global in time}  Strichartz estimates for the 
linear Schr\"odinger evolution on $\R^n$.  Such type of global in time estimates are false when the problem is posed on a compact manifold.

In view of the previous discussion a natural problem is to consider the NLS on $\R^n\times M$, where $M$ is a compact Riemannian manifold. This is the purpose
of this paper. We will show that the global in time dispersive nature of the $\R^n$ part 
is still sufficient to get small data scattering results similarly to the Euclidean case.
Our view point is to see the problem as a NLS type equation for functions on $\R^n$ with values in 
Sobolev spaces on $M$ (instead of $\C$ for the "usual" NLS). 
We should however admit that our approach, as presented here, is not working for  
problems such as the wave equation on product spaces.

In order to emphasize the main ideas of the paper and to avoid technicalities
we will restrict our attention to the cubic nonlinear interaction, even if 
our approach can be extended to other nonlinearities. Consider thus the Cauchy problem
\begin{equation}\label{mixedcauchy}
{\bf i}\partial_t u +\Delta_{x,y} u=\pm |u|^2u,\quad u(0,x,y)=f(x,y),
\end{equation}
with $(t, x, y)\in \R_t\times \R^n_x\times M_y^k$,
where $M^k_y$ is a compact Riemannian manifold of dimension $k\geq 1$
and $\Delta_{x,y}=\Delta_x + \Delta_y$ with $\Delta_y$
the Laplace-Beltrami operator on $M_y^k$
and $\Delta_x=\sum_{j=1}^n \partial_{x_j}^2$
is the Laplace operator associated to the flat metric on $\R^n$.

We are interested here in the scattering of global solutions to \eqref{mixedcauchy},
under  suitable smallness assumptions on the initial data.
In order to state our first result, we introduce a non isotropic Sobolev space. 
Namely, we denote by 
$\mathcal {H}_{x,y}^{\theta, \rho}$ and the completions of $C^\infty_0(\R^n_x\times M_y^k)$
with respect to to the following norm
$$
\|f\|_{\mathcal {H}_{x,y}^{\theta, \rho}}=
\sum_{|\alpha|\leq \theta}\|\partial_x^\alpha (1-\Delta_y)^{\rho/2} f\|_{L^2(\R^n_x\times M_y)}\,,
$$
where for $\alpha=(\alpha_1,\dots,\alpha_n)\in\N^n$, $\partial^{\alpha}_{x}\equiv\partial_{x_1}^{\alpha_1}\dots \partial_{x_n}^{\alpha_n}$
and $|\alpha|\equiv \alpha_1+\cdots+\alpha_n$. Here is our first result.
\begin{thm}\label{cubic}
Let $n\geq 2$ be even. 
Then for every $\epsilon>0$ there exists $\delta=\delta(\epsilon)>0$
such that the Cauchy problem
\eqref{mixedcauchy} has an unique global solution 
$$u(t,x,y)\in L^{\infty}_t{\mathcal H}^{\frac{n-2}{2}, 
\frac k2+\epsilon}_{x,y} \cap X_\epsilon$$
where
\begin{equation}\label{Xcubiceven}
\|u\|_{X_\epsilon}=\sum_{s=0}^{\frac{n-2}2} \sum_{|\alpha|=s}
\|\partial_x^\alpha (1-\Delta_{y})^{\frac{1}{2}(\frac k2+\epsilon)} u\|_{L^4_t L^{\frac{2n}{1+2s}}_xL^2_y}
\end{equation}
for any initial data $f(x, y)\in {\mathcal H}^{\frac{n-2}{2}, \frac k2+\epsilon}_{x,y}$
such that
$\|f\|_{{\mathcal H}^{\frac{n-2}{2}, \frac k2+\epsilon}_{x,y}}<\delta$.
Moreover 
there exist $f_0^\pm\in {\mathcal H}^{\frac {n-2}2, \frac k2+\epsilon}_{x,y}$
such that
\begin{equation}\label{scattering2}
\lim_{t\rightarrow \pm \infty} \|e^{{\bf i}t\Delta_{x,y}} f_0^\pm 
- u(t,x,y)\|_{{\mathcal H}^{\frac {n-2}2, \frac k2+\epsilon}_{x,y}}=0.
\end{equation}
\end{thm}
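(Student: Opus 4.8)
The proof will run via a contraction mapping argument in the space $X_\epsilon$, treating the equation as a nonlinear Schrödinger equation for functions valued in the Sobolev space $H^{k/2+\epsilon}_y$ on $M$. The decisive input is the global-in-time Strichartz estimate on $\R^n$ applied fiberwise in $y$: for the free evolution $e^{it\Delta_x}$ on $\R^n$ with $n\geq 2$ even, the exponents $(4,\tfrac{2n}{1+2s})$ appearing in \eqref{Xcubiceven} are admissible (for $s=\tfrac{n-2}{2}$ one gets $L^4_tL^\infty_x$ at the endpoint of a derivative count, which is why one works with $\tfrac{n-2}{2}$ spatial $x$-derivatives), and because $e^{it\Delta_{x,y}}=e^{it\Delta_x}e^{it\Delta_y}$ with $e^{it\Delta_y}$ unitary on every $H^\rho_y$, one obtains global-in-time Strichartz bounds for $e^{it\Delta_{x,y}}$ with values in $L^2_y$ after applying $(1-\Delta_y)^{\rho/2}$. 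I would first record this as the linear estimate: $\|e^{it\Delta_{x,y}}f\|_{X_\epsilon}\lesssim \|f\|_{\mathcal H^{(n-2)/2,\,k/2+\epsilon}}$, together with the dual (inhomogeneous) Strichartz estimate for the Duhamel term.

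The heart of the matter is the nonlinear estimate: one must show that $N(u)=|u|^2u$ satisfies
\begin{equation*}
\Big\|\int_0^t e^{i(t-\tau)\Delta_{x,y}} N(u(\tau))\,d\tau\Big\|_{X_\epsilon\cap L^\infty_t\mathcal H^{(n-2)/2,\,k/2+\epsilon}} \lesssim \|u\|_{X_\epsilon}^3,
\end{equation*}
and similarly a difference estimate controlling $N(u)-N(v)$ by $(\|u\|_{X_\epsilon}^2+\|v\|_{X_\epsilon}^2)\|u-v\|_{X_\epsilon}$. By the inhomogeneous Strichartz estimate this reduces to bounding $\|(1-\Delta_y)^{(k/2+\epsilon)/2}\partial_x^\alpha(|u|^2u)\|$ in a suitable dual-admissible space-time norm, summed over $|\alpha|\leq\tfrac{n-2}{2}$. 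Here two algebra-type facts must be combined: first, $H^{k/2+\epsilon}_y(M)$ is a Banach algebra since $k/2+\epsilon>k/2=\dim(M)/2$, so $(1-\Delta_y)^{(k/2+\epsilon)/2}$ interacts with the product pointwise in $(t,x)$ up to a product of $H^{k/2+\epsilon}_y$-norms; second, in the $x$-variable one distributes the $\partial_x^\alpha$ derivatives by Leibniz and estimates each factor by Hölder in $(t,x)$, matching the mixed Lebesgue exponents. The Hölder bookkeeping is exactly of the type $\tfrac14+\tfrac14+\tfrac14\neq\tfrac14$, so one does \emph{not} close purely in $L^4_t$; instead one must also use the $L^\infty_t\mathcal H$ component of the norm and/or interpolate, trading some factors into $L^\infty_tL^2$ via Sobolev embedding $H^{(n-2)/2}_x\hookrightarrow L^{q}_x$ — this is the standard small-data subcritical-type trick and it is why the fixed-point space is the intersection $X_\epsilon\cap L^\infty_t\mathcal H^{(n-2)/2,\,k/2+\epsilon}$.

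Once the linear and nonlinear estimates are in hand, standard Banach fixed-point theory on a small ball in $X_\epsilon\cap L^\infty_t\mathcal H^{(n-2)/2,\,k/2+\epsilon}$ gives, for $\|f\|_{\mathcal H^{(n-2)/2,\,k/2+\epsilon}}<\delta$ small enough, a unique global solution of the Duhamel formulation, with $\|u\|_{X_\epsilon}\lesssim\delta$. Scattering then follows in the usual way: set $f_0^\pm=f\mp i\int_0^{\pm\infty}e^{-i\tau\Delta_{x,y}}N(u(\tau))\,d\tau$; the integral converges in $\mathcal H^{(n-2)/2,\,k/2+\epsilon}$ because, by the same nonlinear/dual-Strichartz estimate applied on the time interval $[t,\pm\infty)$, the tail is bounded by $\|u\|_{X_\epsilon([t,\pm\infty))}^3\to0$ as $t\to\pm\infty$ (finiteness of $\|u\|_{X_\epsilon(\R)}$ forces the tails to vanish), which gives \eqref{scattering2}.

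**Main obstacle.** The routine-looking but genuinely delicate point is the nonlinear estimate: closing the Hölder exponent count over the full range $0\le|\alpha|\le\tfrac{n-2}{2}$ while keeping the $y$-regularity exactly at $k/2+\epsilon$. One must verify that for every $s$ in that range the product of the three factors — after Leibniz in $x$ and the algebra property in $y$ — lands in a space-time norm dual to an admissible pair, with no loss in the $M$-direction; the borderline case $s=\tfrac{n-2}{2}$, where the available $x$-integrability in $X_\epsilon$ is $L^{\infty}_x$ and there is no room to spare, is where the argument is tightest and where the choice of $\epsilon>0$ (rather than the scale-invariant exponent $k/2$) is essential to have the algebra property on $M$.
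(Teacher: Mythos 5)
Your overall architecture coincides with the paper's: global-in-time Strichartz for $e^{{\bf i}t\Delta_{x,y}}$ obtained by expanding in eigenfunctions of $-\Delta_y$ and observing that the modulations $e^{{\bf i}t\lambda_j}$ do not affect Strichartz norms (then resumming via Minkowski, since $\max\{\tilde p',\tilde q'\}\le 2\le\min\{p,q\}$), the algebra property of $H^{k/2+\epsilon}(M)$, the Leibniz rule in $x$ (this is where evenness of $n$ enters), a contraction on a small ball, and the standard Cauchy-criterion construction of $f_0^\pm$. However, what you single out as the ``main obstacle'' is based on a misconception, and your proposed resolution of it is both unnecessary and not viable as stated.

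The point you miss is that in the inhomogeneous (retarded) Strichartz estimate the dual pair $(\tilde p,\tilde q)$ on the right-hand side may be chosen \emph{independently} of the pair $(p,q)$ on the left. The paper places the nonlinearity in $L^{4/3}_t L^{2n/(n+1)}_x L^2_y$, which is the dual of the admissible pair $(\tilde p,\tilde q)=(4,\tfrac{2n}{n-1})$ (indeed $\tfrac24+\tfrac{n(n-1)}{2n}=\tfrac n2$). The H\"older count then closes \emph{exactly} within $X_\epsilon$: in time, $\tfrac34=\tfrac14+\tfrac14+\tfrac14$ (three $L^4_t$ factors land in $L^{4/3}_t$, which is precisely $\tilde p'$, not $L^4_t$ --- your inequality $\tfrac14+\tfrac14+\tfrac14\neq\tfrac14$ is comparing against the wrong target); in space, after Leibniz with $|\beta_1|+|\beta_2|+|\beta_3|=\tfrac{n-2}2$, one has $\tfrac{n+1}{2n}=\sum_{j}\tfrac{1+2|\beta_j|}{2n}$, so each factor is estimated in exactly the Lebesgue exponent $L^{2n/(1+2|\beta_j|)}_x$ built into $X_\epsilon$. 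Hence no interpolation with the $L^\infty_t\mathcal H$ component and no Sobolev embedding in $x$ is needed in the trilinear estimate; the $L^\infty_t\mathcal H^{\frac{n-2}2,\frac k2+\epsilon}$ control comes for free from the Strichartz estimate with the pair $(\infty,2)$ and is used only to state the solution space and to run the scattering argument. Your alternative --- putting some factors in $L^\infty_t L^2_x$ so that the product stays in $L^4_t$ --- would require the Duhamel operator to map $L^4_t$-based spaces to Strichartz spaces, i.e.\ $\tilde p'=4$, which corresponds to the non-admissible $\tilde p=4/3<2$; so as written this step would fail. (A secondary slip: at the top derivative count $s=\tfrac{n-2}2$ the $x$-integrability in $X_\epsilon$ is $L^{2n/(n-1)}_x$, not $L^\infty_x$; it is the pairs with $s<\tfrac{n-2}{2}$ that are non-admissible and are recovered from the admissible top-order one by Sobolev embedding, as in the paper's Proposition 3.1.)
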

\begin{remark}
The eveness of $n$ in Theorem \ref{cubic}
is needed since in this case we are able to estimate 
the cubic nonlinearity in the space $X_\epsilon$ 
by using the usual Leibniz rule.
The case $n$ odd is treated in Theorem \ref{cubicodd} below.
\end{remark}
It is interesting to compare Theorem~\ref{Xcubiceven} in the case $n=k=2$ with the recent result \cite{HTT2}.
In \cite{HTT2}, the problem  \eqref{mixedcauchy}  on $\R^2\times\T^2$ is considered ($\T^2$ is the flat $2d$ torus) and the global well-posedness
for small data in the classical Sobolev spaces 
$H^1(\R^2\times\T^2)$ is proved. The scattering to free solution is not obtained in \cite{HTT2}, the globalization argument being based
on conservation laws together with Tataru's critical spaces theory. Therefore our result says that in the context of the analysis in \cite{HTT2} if in addition one
supposes the smallness of the ${\mathcal H}^{0,1+\epsilon}$ norm then one has scattering. Note that the ${\mathcal H}^{0,1+\epsilon}$ norm is slightly stronger
than the $H^1(\R^2\times\T^2)$ only with respect the $y$ variables. Since in our analysis we do not use any dispersive effect in $y$ it would be interesting to
further understand the interplay between our argument in the case $\R^2\times \T^2$ and the corresponding analysis in $H^1(\R^2\times\T^2)$ in \cite{HTT2}.
It is also worth noticing that our argument here is only restricted to the small data cases while the analysis of \cite{HTT2} also applies to the large data problem,
if we consider sub-cubic defocusing nonlinear interactions. 

We next turn to the odd dimensional case. In this case $(n-2)/2$ is not an integer and a direct application of the proof 
of Theorem~\ref{cubic}  would require some non trivial non isotropic Littlewood-Paley theory.
We decided not to pursue this. Instead, we apply a simple argument which reduces the case of $n\geq 3$ odd to the case of $n$ even.
For $n\geq 3$, we define $\mathcal {H}_{\bar x,(x_n,y)}^{\theta, \rho}$ to be the completion of $C^\infty_0(\R^n_x\times M_y^k)$
with respect to to the following norm
\begin{equation*}
\|f\|_{\mathcal {H}_{\bar x,(x_n,y)}^{\theta, \rho}}=\sum_{|\alpha|\leq \theta}
\|\partial^{\alpha}_{\bar x} 
(1-\partial_{x_n}^2- \Delta_y)^{\rho/2} f\|_{L^2(\R^n_x\times M_y)},
\end{equation*}
where  $\bar{x}=(x_1,\dots,x_{n-1})$ and for 
$\alpha=(\alpha_1,\dots,\alpha_{n-1})\in\N^{n-1}$, $\partial^{\alpha}_{\bar x}\equiv\partial_{x_1}^{\alpha_1}\dots \partial_{x_{n-1}}^{\alpha_{n-1}}$.
Here is our result concerning the odd dimensions $n\geq 3$.
\begin{thm}\label{cubicodd}
Let $n\geq 3$ be odd. Then for every $\epsilon>0$ there exists  $\delta=\delta(\epsilon)>0$ such that the Cauchy problem
\eqref{mixedcauchy} has an unique global solution 
$$u(t,x,y)\in L^{\infty}_t{\mathcal H}^{\frac{n-3}{2}, 
\frac {(k+1)}2+\epsilon}_{\bar x,(x_n,y)} \cap X_\epsilon$$
where
\begin{equation}\label{Xcubicodd}
\|u\|_{X_\epsilon}=\sum_{s=0}^{\frac{n-3}2} 
\sum_{|\alpha|=s}\|\partial_{\bar x}^\alpha(1-\partial_{x_n}^2- \Delta_y)^{\frac{k+1}4+\epsilon}u\|_{L^4_t L^{\frac{2(n-1)}{1+2s}}_xL^2_{(x_n,y)}}
\end{equation}
for any initial data $f(x, y)\in {\mathcal H}^{\frac{n-3}{2}, 
\frac{k+1}2+\epsilon}_{\bar x,(x_n,y)}$
such that
$\|f\|_{{\mathcal H}^{\frac{n-3}{2}, 
\frac{k+1}2+\epsilon}_{\bar x,(x_n,y)}}<\delta$.
Moreover 
there exist $f_0^\pm\in {\mathcal H}^{\frac {n-3}2, 
\frac{k+1}2+\epsilon}_{\bar x,(x_n,y)}$
such that
\begin{equation}\label{scattering3}\lim_{t\rightarrow \pm \infty} 
\|e^{{\bf i}t\Delta_{x,y}} f_0^\pm 
- u(t,x,y)\|_{{\mathcal H}^{\frac {n-3}2, \frac{k+1}2+\epsilon}_{\bar x,(x_n,y)}}=0.
\end{equation}
\end{thm}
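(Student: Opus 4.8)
The plan is to deduce the odd-dimensional result from the even-dimensional Theorem~\ref{cubic} by a lifting (or rather a "freezing") argument in which the last Euclidean variable $x_n$ is absorbed into the compact factor. Concretely, set $\bar x=(x_1,\dots,x_{n-1})\in\R^{n-1}$ and regard $(x_n,y)$ as a variable on the new compact-type manifold $N:=\T\times M^k$ of dimension $k+1$ — or, to be safe about the precise hypotheses, one works on $\R^{n-1}_{\bar x}\times N_{(x_n,y)}$ after noting that the proof of Theorem~\ref{cubic} never used compactness of $M$ beyond the discreteness/Sobolev-embedding properties of $\Delta_y$, which are equally available for $-\partial_{x_n}^2-\Delta_y$ on $\R_{x_n}\times M^k$ once one has the uniform Sobolev estimate $H^{(k+1)/2+\epsilon}\hookrightarrow L^\infty$ in the $(x_n,y)$ variables. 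The operator $1-\partial_{x_n}^2-\Delta_y$ plays exactly the role that $1-\Delta_y$ played before; since $n-1$ is even, applying Theorem~\ref{cubic} (with $n$ replaced by $n-1$, $k$ replaced by $k+1$, and the Sobolev exponent $\frac k2+\epsilon$ replaced by $\frac{k+1}2+\epsilon$) to the Cauchy problem
\[
{\bf i}\partial_t u+\Delta_{\bar x}u+(\partial_{x_n}^2+\Delta_y)u=\pm|u|^2u,\qquad u(0)=f,
\]
which is literally \eqref{mixedcauchy} with the Laplacian regrouped, yields a unique global solution in $L^\infty_t\mathcal H^{\frac{n-3}{2},\frac{k+1}{2}+\epsilon}_{\bar x,(x_n,y)}\cap X_\epsilon$ with precisely the norm \eqref{Xcubicodd}, for data with small $\mathcal H^{\frac{n-3}{2},\frac{k+1}{2}+\epsilon}_{\bar x,(x_n,y)}$-norm, together with the scattering statement \eqref{scattering3}, because $e^{{\bf i}t\Delta_{x,y}}=e^{{\bf i}t(\Delta_{\bar x}+\partial_{x_n}^2+\Delta_y)}$ is the very same free propagator under either grouping.

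The steps, in order: first, verify that the abstract scheme behind Theorem~\ref{cubic} depends on the "$M$-part" only through (i) the global-in-time Strichartz estimates for $e^{{\bf i}t\Delta_{\R^m}}$ on the Euclidean factor (here $m=n-1$, even), and (ii) the algebra/Sobolev-embedding property $\|fg\|_{L^2_z}\lesssim \|f\|_{\mathcal H^{\sigma}_z}\|g\|_{L^2_z}$ with $\sigma>\dim/2$ of the operator governing the transverse variable $z$, used to control $|u|^2u$ fiberwise. Second, observe that the operator $-\partial_{x_n}^2-\Delta_y$ on $\R_{x_n}\times M^k$ has nonnegative spectrum and that $(1-\partial_{x_n}^2-\Delta_y)^{-(\frac{k+1}{2}+\epsilon)/2}$ maps $L^2$ into $L^\infty$ in $(x_n,y)$ by Sobolev embedding (dimension $k+1$), so property (ii) holds with $z=(x_n,y)$ and $\sigma=\frac{k+1}2+\epsilon$. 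Third, run verbatim the fixed-point / Strichartz argument that proves Theorem~\ref{cubic}, now with the pair $(\bar x,(x_n,y))$ in place of $(x,y)$: contraction in the space defined by \eqref{Xcubicodd}, persistence in $L^\infty_t\mathcal H^{\frac{n-3}{2},\frac{k+1}{2}+\epsilon}_{\bar x,(x_n,y)}$, and construction of the wave operators giving $f_0^\pm$ and \eqref{scattering3}. Since $n-1$ is even, the Leibniz rule used to estimate $\partial_{\bar x}^\alpha(|u|^2u)$ in mixed $L^4_tL^{\frac{2(n-1)}{1+2s}}_{\bar x}L^2_{(x_n,y)}$ spaces applies exactly as in the even case — this is the point of the regrouping.

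The main obstacle, and the only place where a little care is genuinely needed, is checking that $\R_{x_n}\times M^k$ may legitimately be treated as the "compact manifold" $M$ was: the operator $1-\partial_{x_n}^2-\Delta_y$ is not the Laplace–Beltrami operator of a compact manifold (its spectrum is continuous because of the $\R_{x_n}$ direction), so one must confirm that nowhere in the proof of Theorem~\ref{cubic} did we use discreteness of the spectrum, finiteness of volume, or Strichartz estimates on $M$ — only the two ingredients (i)–(ii) above, both of which survive. Equivalently, one can sidestep the issue entirely by replacing $\R_{x_n}$ by a large circle $\T_R$ (compact), solving on $\R^{n-1}\times\T_R\times M^k$, and then letting $R\to\infty$ with uniform-in-$R$ bounds coming from the fact that all constants depend on $\T_R$ only through the Sobolev embedding $H^{\frac{k+1}2+\epsilon}(\T_R\times M^k)\hookrightarrow L^\infty$, which is $R$-uniform; a soft limiting argument then produces the solution on $\R^{n-1}\times\R_{x_n}\times M^k=\R^n\times M^k$ with the stated norms. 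Once this identification is in place, everything else is a transcription of the even-dimensional proof, and the scattering conclusion \eqref{scattering3} follows from \eqref{scattering2} because the two free evolutions coincide.
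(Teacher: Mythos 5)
Your proposal is correct and follows essentially the same route as the paper: regroup $\R^n_x\times M^k_y$ as $\R^{n-1}_{\bar x}\times(\R_{x_n}\times M^k_y)$ so that the Euclidean factor has even dimension, observe that $\R_{x_n}\times M^k$ fails to be compact so Theorem~\ref{cubic} cannot be cited verbatim, and then verify that the only ingredients needed (the global Strichartz estimate, uniform in the transverse modulation, and the algebra property of $H^{\frac{k+1}{2}+\epsilon}$ in the $(x_n,y)$ variables) survive. The paper makes the first verification precise by taking the partial Fourier transform in $x_n$ together with the eigenfunction expansion on $M$, which replaces the discrete sum $l^2_j$ by $L^2_{\xi_n}l^2_j$ and uses exactly the uniformity in $m\in\R$ of Proposition~\ref{above} that you invoke; your auxiliary $\T_R\to\infty$ limiting argument is therefore not needed.
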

An analogue of Theorem~\ref{cubicodd} can not hold for $n=1$. In this case we expect a suitable Banach space valued version of the modified scattering result 
of Ozawa \cite{O}. Such a result will give an insight into the global small data dynamics of the cubic NLS on $\R\times\T$, established in \cite{TT}.

In the case $n=1$ one can obtain, by invoking a Banach space valued version of the small data theory of the quintic NLS on $\R$,
an analogue of Theorem~\ref{cubicodd} if the cubic non linearity is replaced by the quintic one, namely $\pm |u|^2u$ replaced by $\pm |u|^4u$.

The remaining part of the paper is organized as follows. In the next section, we establish a basic  Strichartz inequality. 
This inequality only uses the dispersive effect in the $x$ variables but have the advantage to be global in time.
Next, we prove Theorem~\ref{cubic}. The final section is devoted to the proof of Theorem~\ref{cubicodd}.
\section{A Strichartz type inequality}
In this section, we establish our basic tool which is a Strichartz type estimate for $e^{{\bf i}t\Delta_{x,y}}$. Here is the precise statement.
\begin{prop}\label{strichrac}
For every $n\geq 1$ and for every compact Riemannian manifold $M^k_y$
the following estimate holds:
\begin{multline}\label{strichartzeven}
\|e^{{\bf i}t\Delta_{x,y}} f\|_{L^p_tL^q_xL^2_y} + \|\int_0^t e^{{\bf i}(t-\tau)\Delta_{x,y}} F(\tau, x,y) d\tau\|_{L^p_tL^q_xL^2_y}
\\
\leq C (\|f\|_{L^2_{x,y}} 
+ \|F\|_{L^{\tilde p'}_tL^{\tilde q'}_x L^2_y}),
\end{multline}
where $C=C(p, q, \tilde p, \tilde q)>0$ and 
$$\frac 2p + \frac nq=\frac n2,\quad \frac 2{\tilde p} + \frac n{\tilde q}=\frac n2$$
$2\leq p,\tilde p\leq \infty$ for $n>2$, $2<p, \tilde p\leq \infty$ for $n=2$
and $4\leq p, \tilde p\leq \infty$ for $n=1$.
\end{prop}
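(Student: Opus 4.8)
The plan is to reduce the mixed-norm estimate on $\R^n_x\times M^k_y$ to the classical Strichartz estimates for the free Schrödinger evolution on $\R^n$ by diagonalizing the $\Delta_y$ part. First I would expand $f$ and $F$ in the orthonormal basis $(\varphi_m)_{m\geq 0}$ of $L^2(M^k_y)$ consisting of eigenfunctions of $-\Delta_y$, say $-\Delta_y\varphi_m=\mu_m\varphi_m$. Writing $f(x,y)=\sum_m f_m(x)\varphi_m(y)$ and $F(\tau,x,y)=\sum_m F_m(\tau,x)\varphi_m(y)$, the operator $e^{{\bf i}t\Delta_{x,y}}$ acts as
\begin{equation*}
e^{{\bf i}t\Delta_{x,y}}f=\sum_m e^{-{\bf i}t\mu_m}\,\big(e^{{\bf i}t\Delta_x}f_m\big)(x)\,\varphi_m(y).
\end{equation*}
The crucial point is that the scalar phase $e^{-{\bf i}t\mu_m}$ has modulus one, so it is harmless: by orthonormality of $(\varphi_m)$ in $L^2_y$, Minkowski's integral inequality (to pull the $\ell^2_m$ sum inside the $L^p_tL^q_x$ norm, using $p,q\geq 2$), and the classical endpoint/non-endpoint Strichartz estimates on $\R^n$ applied to each $f_m$,
\begin{equation*}
\|e^{{\bf i}t\Delta_{x,y}}f\|_{L^p_tL^q_xL^2_y}
=\Big\|\big(\sum_m |e^{{\bf i}t\Delta_x}f_m|^2\big)^{1/2}\Big\|_{L^p_tL^q_x}
\leq \big(\sum_m \|e^{{\bf i}t\Delta_x}f_m\|_{L^p_tL^q_x}^2\big)^{1/2}
\lesssim \big(\sum_m \|f_m\|_{L^2_x}^2\big)^{1/2}=\|f\|_{L^2_{x,y}}.
\end{equation*}

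For the inhomogeneous term I would argue identically: the Duhamel operator diagonalizes as $\int_0^t e^{{\bf i}(t-\tau)\Delta_{x,y}}F\,d\tau=\sum_m \varphi_m(y)\int_0^t e^{-{\bf i}(t-\tau)\mu_m} e^{{\bf i}(t-\tau)\Delta_x}F_m(\tau)\,d\tau$, and after absorbing the unimodular phase the inner integral is exactly the object controlled by the inhomogeneous Strichartz estimate on $\R^n$ with the dual exponents $(\tilde p',\tilde q')$; the admissibility conditions $\frac2p+\frac nq=\frac n2$, $\frac2{\tilde p}+\frac n{\tilde q}=\frac n2$ with the stated ranges are precisely the Keel–Tao admissibility range on $\R^n$ (non-endpoint for $n=2$, the $L^4_t$ restriction for $n=1$ being the sharp admissible pair there). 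One subtlety worth spelling out is the justification of Minkowski's inequality and of moving the summation through the time integral for the Duhamel term; this is routine since all exponents are $\geq 2$ and one may first work with finitely many modes $m\leq N$ and pass to the limit, the bounds being uniform in $N$.

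I do not expect a genuine obstacle here: the proposition is essentially a "tensorization" of the known $\R^n$ Strichartz estimates, and the only thing that makes it work is that $e^{{\bf i}t\Delta_y}$ commutes with $e^{{\bf i}t\Delta_x}$ and is unitary on $L^2_y$ for every fixed $t$. The mildly delicate point — the one I would be most careful about — is getting the vector-valued ($L^2_y$-valued) Strichartz inequality with the correct order of norms, i.e. confirming that $\|\cdot\|_{L^p_tL^q_xL^2_y}$ (rather than $\|\cdot\|_{L^p_tL^2_yL^q_x}$) is the one that comes out, which is exactly why Minkowski's inequality is invoked in the direction above and forces the hypothesis $q\geq 2$. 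No dispersive information about $M^k_y$ is used anywhere, consistent with the remark in the introduction.
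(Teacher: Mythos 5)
Your argument is exactly the paper's proof: expand in the eigenfunctions of $-\Delta_y$, absorb the unimodular phases $e^{-{\bf i}t\mu_m}$ into the (mode-uniform) Strichartz estimates on $\R^n$, sum the squares, and use Minkowski to exchange $\ell^2_m$ with the mixed Lebesgue norms, which works because $\max\{\tilde p',\tilde q'\}\leq 2\leq\min\{p,q\}$. The only point to state a bit more carefully is that for the inhomogeneous term Minkowski is applied in the opposite direction (relying on $\tilde p',\tilde q'\leq 2$ rather than on exponents being $\geq 2$), but this is exactly what the paper does and your proof is correct.
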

\begin{proof}
Let us recall the usual Strichartz estimates 
for the free propagators $e^{{\bf i}t(\Delta_x+m)}$ on $\R^n_x$ with $m\in \R$:
\begin{equation}\label{strichartzfree}
\sup_{m\in \R} (\|e^{{\bf i}t(\Delta_{x}+m)} h\|_{L^p_tL^q_x} 
+ \|\int_0^t e^{{\bf i}(t-\tau)(\Delta_{x}+m)} H(\tau, x) d\tau\|_{L^p_tL^q_x})
\end{equation}
$$\leq C (\|h\|_{L^2_{x}} 
+ \|H\|_{L^{\tilde p'}_tL^{\tilde q'}_x})$$
under the same assumptions on $p,\tilde p, q, \tilde q$,
with $C=C(p, \tilde p, q, \tilde q)>0$ that does not depend on $m$.
Recall that the usual Strichartz estimate concerns the propagator
$e^{{\bf i}t\Delta_x}$. On the other hand in \eqref{strichartzfree}
we are allowed to get uniform bounds with respect to $m\in \R$ since
$e^{{\bf i}t(\Delta_x +m)}=e^{{\bf i}tm}e^{{\bf i}t\Delta_x}$
and moreover the Strichartz norm are not affected
by the remodulation factor $e^{{\bf i}tm}$.
Next we introduce
$$
u(t, x,y)=e^{{\bf i}t\Delta_{x,y}} f +
\int_0^t e^{{\bf i}(t-\tau)\Delta_{x,y}} F(\tau,x,y) d\tau
$$
and notice that
$${\bf i} \partial_t u+ \Delta_{x} u + \Delta_y u = F, \qquad 
(t,x,y)\in \R\times \R_x^n\times M_y$$
with
$$u(0,x,y)= f(x, y).$$
Let us decompose
$$u(t,x, y), f(x, y) \hbox{ and } F(t,x,y) $$ 
with respect to the orthonormal basis $\{\varphi_j(y)\}$ of $L^2(M_y)$
given by the eigenfunctions of $-\Delta_y$ (i.e. 
$-\Delta_y \varphi_j=\lambda_j \varphi_j$)
\begin{equation}\label{sopra}
u(t,x, y)=\sum_{j} u_j(t, x) \varphi_j(y)
\end{equation}
\begin{equation}\label{mezzo}
F(t,x, y)=\sum_{j} F_j(t,x) \varphi_j(y)
\end{equation}
\begin{equation*}
f(x, y)=\sum_{j} f_j(x) \varphi_j(y)
\end{equation*}
and notice that
$u_j(t, x), F_j(t,x) \hbox{ and }f_j(x) $
are related by the following Cauchy problems:
\begin{equation}\label{reduction}
{\bf i} \partial_t u_j+\Delta_{x} u_j - \lambda_j u_j = F_j, (t,x)\in \R_t\times \R^n_x
\end{equation}
with
\begin{equation*}u_j(0,x)= f_j(x).
\end{equation*}
Applying \eqref{strichartzfree} in the context of \eqref{reduction} gives
$$\|u_j(t,x)\|_{L^p_tL^q_x}
\leq C\|f_j\|_{L^2}+C \|F_j(t,x)\|_{L^{\tilde p'}_tL^{\tilde q'}_x}$$
and hence summing in $j$ the squares we get
$$ \|u_j(t, x)\|_{l^2_{j} L^p_tL^q_x}\leq 
C\|f\|_{L^2_{x,y}}+C\|F_j(t, x)\|_{l^2_{j}L^{\tilde p'}_t L^{\tilde q'}_x}.$$
On the other hand
$$
\max\{\tilde p', \tilde q'\}\leq 2\leq \min\{p, q\}
$$
and therefore by the Minkowski inequality we get 
$$ \|u_j(t, x)\|_{L^p_tL^q_x l^2_{j}}\leq C \|f\|_{L^2_{x,y}}+C\|F_j(t, x)\|_{L^{\tilde p'}_t L^{\tilde q'}_x l^2_{j}}.$$
Combining \eqref{sopra} and \eqref{mezzo} with the Plancharel identity gives
$$
\|u\|_{L^p_tL^q_x L^2_{y}}\leq C \|f\|_{L^2_{x,y}}+C\|F\|_{L^{\tilde p'}_t L^{\tilde q'}_x L^2_{y}}.
$$
Finally, we apply the last inequality first with $f=0$ and then $F=0$ to achieve the bound \eqref{strichartzeven}.
This completes the proof of Proposition~\ref{strichrac}.
\end{proof}
\section{Proof of Theorem \ref{cubic}}\label{cubicsec}
We recall a suitable version of Strichartz estimates
for the classical propagator $e^{{\bf i}t(\Delta_x+m)}$ on $\R^n_x$ with $n$ an even integer
and $m\in \R$.
Notice that we use that $n$ is even in order to have that $(n-2)/2$ is an integer
which allows us to give a meaning to the 
derivation operators up to $(n-2)/2$ that appears on the r.h.s.
of \eqref{uniformmstri} (see also below remark \ref{s}).
\begin{prop}\label{above}
Let $n\geq 2$ be even,
$\alpha\in \N^n$ such that $0\leq |\alpha|\leq \frac{n-2}{2}$, then
\begin{multline}\label{uniformmstri}
\sup_{m\in \R} \Big(\|\partial_x^{\alpha} 
e^{{\bf i}t(\Delta_x+m)} h\|_{L^p_tL^q_x}+ \|\partial_x^\alpha (\int_0^t e^{{\bf i}(t-\tau)(\Delta_x+m)} H(\tau,x) d\tau)\|_{L^p_tL^q_x}\Big)
\\
\leq 
C\Big(\|h\|_{H^{\frac{n-2}2}_x}+\sum_{|\beta|= \frac{n-2}{2}}\|\partial^\beta_x H\|_{L^{\tilde p'}_t L^{\tilde q'}_x}\Big),
\end{multline}
where $C=C(p, q, \tilde p, \tilde q)>0$ does not depend on $m\in \R$, 
$$
\frac2p+\frac nq=1+|\alpha|,\quad \frac2{\tilde p}+\frac n{\tilde q}=\frac n2,\quad 2<p, \tilde p\leq \infty.
$$
\end{prop}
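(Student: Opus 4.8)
The plan is to reduce \eqref{uniformmstri} to a combination of the classical (homogeneous and inhomogeneous) Strichartz estimates on $\R^n_x$ together with a Gagliardo--Nirenberg / Sobolev interpolation that converts the gain of $|\alpha|$ derivatives in the admissibility exponent into a fixed-order Sobolev cost on the data. First I would note, exactly as in the proof of Proposition~\ref{strichrac}, that the remodulation identity $e^{{\bf i}t(\Delta_x+m)}=e^{{\bf i}tm}e^{{\bf i}t\Delta_x}$ means the modulation factor $e^{{\bf i}tm}$ (resp. $e^{{\bf i}(t-\tau)m}$ in the Duhamel term) has modulus one and is independent of $x$, so it commutes with $\partial_x^\alpha$ and does not affect any space-time Lebesgue norm. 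Hence it suffices to prove \eqref{uniformmstri} with $m=0$, and the constant is then automatically $m$-independent.

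With $m=0$ the left-hand side involves $\partial_x^\alpha e^{{\bf i}t\Delta_x}h = e^{{\bf i}t\Delta_x}(\partial_x^\alpha h)$, so the homogeneous part is $\|e^{{\bf i}t\Delta_x}(\partial_x^\alpha h)\|_{L^p_tL^q_x}$. The pair $(p,q)$ with $\frac2p+\frac nq=1+|\alpha|$ is \emph{not} $L^2$-admissible when $|\alpha|\ge1$; the standard device is to write it as an $L^2$-admissible estimate applied to a fractionally differentiated datum. Concretely, choose the admissible exponent $q_0$ by $\frac2p+\frac n{q_0}=\frac n2$ (same $p$), so that $\frac n{q_0}-\frac nq=1+|\alpha|-\frac n2$, i.e. $\frac1{q_0}-\frac1q=\frac{1+|\alpha|}{n}-\frac12$; since $0\le|\alpha|\le\frac{n-2}2$ this exponent difference lies in $[0,\frac1n\cdot\frac{n-2}{2}\cdot\ldots]$ — in any case it is nonnegative and corresponds to a Sobolev embedding $\dot H^{s}_x\hookrightarrow L^q_x$ with $s=\frac n{q_0}-\frac nq\cdot\frac12\cdots$; more cleanly, by the Sobolev inequality $\|g\|_{L^q_x}\lesssim\||\nabla_x|^{\sigma}g\|_{L^{q_0}_x}$ with $\sigma = n(\frac1{q_0}-\frac1q)=1+|\alpha|-\frac n2+\frac n2-\ldots$ Let me just say it as: pick $q_0$ admissible with exponent pair $(p,q_0)$, apply Sobolev in $x$ to trade $L^q_x$ for $L^{q_0}_x$ at the cost of $\sigma:=n(\tfrac1{q_0}-\tfrac1q)$ derivatives, and then apply \eqref{strichartzfree} (with $m=0$). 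This yields
\[
\|\partial_x^\alpha e^{{\bf i}t\Delta_x}h\|_{L^p_tL^q_x}\lesssim \||\nabla_x|^{\sigma}\partial_x^\alpha e^{{\bf i}t\Delta_x}h\|_{L^p_tL^{q_0}_x}= \|e^{{\bf i}t\Delta_x}(|\nabla_x|^{\sigma}\partial_x^\alpha h)\|_{L^p_tL^{q_0}_x}\lesssim \||\nabla_x|^{\sigma}\partial_x^\alpha h\|_{L^2_x}\lesssim\|h\|_{H^{\frac{n-2}{2}}_x},
\]
the last step because $\sigma+|\alpha|=\frac{n-2}{2}$ by the defining relations of $p,q,q_0$ (this is the arithmetic to check: $\sigma = n(\frac1{q_0}-\frac1q)$ and $\frac n{q_0}-\frac nq=(1+|\alpha|)-\frac n2$ gives $\sigma=1+|\alpha|-\frac n2+\ldots$ — wait, one must be careful that $\sigma$ is defined via the Sobolev embedding $\dot W^{\sigma,q_0}\hookrightarrow L^q$, which requires $\sigma=n(\frac1{q_0}-\frac1q)$, and then indeed $|\alpha|+\sigma=\frac{n-2}2$, so $h$ is measured in $\dot H^{(n-2)/2}_x\subset H^{(n-2)/2}_x$). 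The inhomogeneous term is handled the same way: apply Sobolev in $x$ to pass from $L^q_x$ to $L^{q_0}_x$, commute $\partial_x^\alpha$ and $|\nabla_x|^\sigma$ through the Duhamel operator, apply the inhomogeneous half of \eqref{strichartzfree} with endpoint pair $(\tilde p,\tilde q)$, and absorb the $|\alpha|+\sigma=\frac{n-2}2$ derivatives onto $H$, writing them as $\sum_{|\beta|=(n-2)/2}\partial^\beta_x H$ up to equivalence of the $\dot H^{(n-2)/2}$-type norm with the sum of pure $L^{\tilde p'}_tL^{\tilde q'}_x$ norms of top-order derivatives (here one uses that $\frac{n-2}{2}$ is an integer, which is exactly why $n$ even is assumed).

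The main obstacle is bookkeeping rather than conceptual: one must verify that the auxiliary admissible pair $(p,q_0)$ is genuinely admissible in the allowed range (in particular $q_0<\infty$ and $p>2$, which holds because $2<p\le\infty$ is assumed and the constraint $\frac2p+\frac n{q_0}=\frac n2$ then forces $q_0\in[2,\infty)$ for $n\ge3$ and $q_0\in(2,\infty)$ for $n=2$), and that the Sobolev exponent $\sigma$ is nonnegative with $\sigma+|\alpha|=\frac{n-2}{2}$ — the upper bound $|\alpha|\le\frac{n-2}2$ is precisely what guarantees $\sigma\ge0$ so that we are gaining, not losing, integrability. A minor additional point is the use of the Mihlin multiplier / Sobolev-space equivalence $\|g\|_{\dot H^{N}_x}\simeq\sum_{|\beta|=N}\|\partial^\beta_x g\|_{L^2_x}$ and its $L^{\tilde q'}$ analogue for integer $N$, which again needs $n$ even; once these exponent identities are pinned down the estimate follows by concatenating the two displayed chains. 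No dispersive input beyond \eqref{strichartzfree} and the Sobolev embedding on $\R^n_x$ is needed, and uniformity in $m$ is free from the remodulation remark.
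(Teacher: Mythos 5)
Your argument—reducing to $m=0$ by the remodulation identity and then converting the non-admissible pair $(p,q)$ into the admissible pair $(p,q_0)$ via the Sobolev embedding $\dot W^{\frac{n-2}{2}-|\alpha|,q_0}(\R^n)\subset L^q(\R^n)$, so that the classical Strichartz estimate applies at the cost of exactly $\frac{n-2}{2}$ derivatives on the data—is precisely the proof given in the paper. The exponent arithmetic ($\sigma+|\alpha|=\frac{n-2}{2}$, $\frac{2}{p}+\frac{n}{q_0}=\frac{n}{2}$) checks out, so the proposal is correct and takes essentially the same route.
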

\begin{proof}
Observe that, under our restriction on $p$, we have $1<q<\infty$. We have the Sobolev embedding 
$$
\dot{W}^{\frac{n-2}{2}-|\alpha|,q_1}(\R^n)\subset L^q(\R^n),\quad \frac{n}{q_1}-\frac{n}{q}=\frac{n-2}{2}-|\alpha|\implies \frac{2}{p}+\frac{n}{q_1}=\frac{n}{2}.
$$
Therefore the left hand-side of \eqref{uniformmstri} is bounded by
$$
\sup_{m\in \R} \sum_{|\beta|=\frac{n-2}{2}}
\Big(\|\partial_x^{\beta} e^{{\bf i}t(\Delta_x+m)} h
\|_{L^p_tL^{q_1}_x}+ \|\partial_x^\beta (\int_0^t e^{{\bf i}(t-\tau)(\Delta_x+m)} H(\tau,x) d\tau)
\|_{L^p_tL^{q_1}_x}\Big).
$$
Thus the estimate \eqref{uniformmstri}
for a fixed $m$ follows the usual Strichartz estimates thanks to the relation
$\frac{2}{p}+\frac{n}{q_1}=\frac{n}{2}$.
The uniformity of the estimate with respect to $m\in \R$ can be deduced 
as in \eqref{strichartzfree}.
\end{proof}
\begin{remark}\label{s}
Notice that in the case $n$ odd an estimate similar to \eqref{uniformmstri}
is satisfied provided that the local operator $\partial_x^\alpha$ is replaced by $(1-\Delta_x)^{|\alpha|/2}$.
However for our purpose it will be relevant to work with $\partial_x^\alpha$, in view of the possibility to apply for this operator the usual Leibniz rule
for the derivation of a product.
\end{remark}
The next result will be fundamental in the sequel.
\begin{prop}\label{mixed}
Let $n\geq 2$ be even,
$\alpha\in \N^n$ such that $0\leq |\alpha|\leq \frac{n-2}{2}$ and $r\geq 0$, then we have
\begin{multline}\label{roip}
\|\partial^{\alpha}_x (1-\Delta_y)^{r/2}e^{{\bf i}t\Delta_{x,y}} f\|_{L^p_tL^q_xL^2_y}
\\
+ \|\partial_x^\alpha (1-\Delta_y)^{r/2} (\int_0^t e^{{\bf i}(t-\tau)\Delta_{x,y}} 
F(\tau) d\tau)\|_{L^p_tL^q_xL^2_y}
\\
\leq C \Big(\|f\|_{{\mathcal H}^{\frac{n-2}2,r}_{x,y}}
+\sum_{|\beta|= \frac{n-2}{2}}
\|\partial^{\beta}_x (1-\Delta_y)^{r/2} F
\|_{L^{\tilde p'}_t L^{\tilde q'}_x L^2_y}\Big),
\end{multline}
where $C=C(p, \tilde p, q, \tilde q)>0$,
$$
\frac2p+\frac nq= 1+|\alpha|,\quad
\frac2{\tilde p}+\frac n{\tilde q}=\frac n2,\quad 2<p, \tilde p\leq \infty.
$$
\end{prop}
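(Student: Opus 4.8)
The plan is to mirror the proof of Proposition~\ref{strichrac}, replacing the plain free Strichartz estimate \eqref{strichartzfree} by its derivative-gaining uniform-in-$m$ counterpart, Proposition~\ref{above}. First I would expand $f$ and $F$ in the orthonormal basis $\{\varphi_j(y)\}$ of $L^2(M_y)$ of eigenfunctions of $-\Delta_y$, writing $f=\sum_j f_j(x)\varphi_j(y)$ and $F=\sum_j F_j(t,x)\varphi_j(y)$ as in \eqref{sopra}--\eqref{mezzo}. Since $(1-\Delta_y)^{r/2}$ acts as multiplication by $(1+\lambda_j)^{r/2}$ on the $j$-th mode and commutes with $\partial_x^\alpha$ and with $e^{{\bf i}t(\Delta_x-\lambda_j)}$, and since $e^{{\bf i}t\Delta_{x,y}}$ decouples into the scalar evolutions $e^{{\bf i}t(\Delta_x-\lambda_j)}$ mode by mode (cf.\ \eqref{reduction}), Plancherel's identity in $y$ gives, pointwise in $(t,x)$,
\[
\big\|\partial_x^\alpha(1-\Delta_y)^{r/2}e^{{\bf i}t\Delta_{x,y}}f\big\|_{L^2_y}^2=\sum_j (1+\lambda_j)^r\big|\partial_x^\alpha e^{{\bf i}t(\Delta_x-\lambda_j)}f_j(x)\big|^2,
\]
and likewise for the Duhamel term with $f_j$ replaced by $\int_0^t e^{{\bf i}(t-\tau)(\Delta_x-\lambda_j)}F_j(\tau)\,d\tau$.

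Next I would take the $L^p_tL^q_x$ norm of the square root. Since under our hypotheses $2<p\le\infty$ and (as already observed in the proof of Proposition~\ref{above}) $2\le q<\infty$, Minkowski's inequality lets me pull the $\ell^2_j$ norm out of $L^p_tL^q_x$:
\[
\big\|\partial_x^\alpha(1-\Delta_y)^{r/2}e^{{\bf i}t\Delta_{x,y}}f\big\|_{L^p_tL^q_xL^2_y}\le\Big(\sum_j(1+\lambda_j)^r\big\|\partial_x^\alpha e^{{\bf i}t(\Delta_x-\lambda_j)}f_j\big\|_{L^p_tL^q_x}^2\Big)^{1/2},
\]
with the analogous bound for the Duhamel term. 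Now I apply Proposition~\ref{above} with $m=-\lambda_j$ — it is essential here that the constant there does not depend on $m$, hence not on $j$ — to each summand: the homogeneous term produces $\|f_j\|_{H^{(n-2)/2}_x}^2$, which for the integer $(n-2)/2$ is comparable to $\sum_{|\gamma|\le(n-2)/2}\|\partial_x^\gamma f_j\|_{L^2_x}^2$, and the Duhamel term produces $\big(\sum_{|\beta|=(n-2)/2}\|\partial_x^\beta F_j\|_{L^{\tilde p'}_tL^{\tilde q'}_x}\big)^2$.

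Finally I would reassemble. For the homogeneous contribution, the triangle inequality in $\ell^2_j$ turns $\sum_j(1+\lambda_j)^r\big(\sum_{|\gamma|\le(n-2)/2}\|\partial_x^\gamma f_j\|_{L^2_x}\big)^2$ into a finite sum of terms $\big(\sum_j(1+\lambda_j)^r\|\partial_x^\gamma f_j\|_{L^2_x}^2\big)^{1/2}=\|\partial_x^\gamma(1-\Delta_y)^{r/2}f\|_{L^2_{x,y}}$, whose sum is comparable to $\|f\|_{{\mathcal H}^{(n-2)/2,r}_{x,y}}$ by Plancherel in $y$ and the definition of the norm. For the Duhamel contribution, the same triangle inequality in $\ell^2_j$ reduces matters to bounding, for each fixed $\beta$ with $|\beta|=(n-2)/2$, the quantity $\big(\sum_j(1+\lambda_j)^r\|\partial_x^\beta F_j\|_{L^{\tilde p'}_tL^{\tilde q'}_x}^2\big)^{1/2}$; since now $\tilde p'<2$ and $\tilde q'\le2$, Minkowski's inequality runs in the opposite direction and bounds this by $\|\partial_x^\beta(1-\Delta_y)^{r/2}F\|_{L^{\tilde p'}_tL^{\tilde q'}_xL^2_y}$, again by Plancherel in $y$. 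Summing over $\beta$ and combining with the homogeneous estimate (applied once with $F=0$ and once with $f=0$, exactly as at the end of the proof of Proposition~\ref{strichrac}) yields \eqref{roip}. The only genuinely delicate point is keeping track of the two opposite uses of Minkowski's inequality — one to move $\ell^2_j$ outside the $L^p_tL^q_x$ norm on the left, the other to move it inside the $L^{\tilde p'}_tL^{\tilde q'}_x$ norm on the right — which is precisely what makes the exponent constraints $2\le p,q$ and $\tilde p',\tilde q'\le2$, already built into the hypotheses, indispensable; everything else is a routine reprise of Proposition~\ref{strichrac}.
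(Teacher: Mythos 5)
Your proof is correct and follows essentially the same route as the paper: eigenfunction decomposition in $y$, mode-by-mode application of the uniform-in-$m$ estimate of Proposition~\ref{above} with $m=-\lambda_j$, and the two opposite Minkowski exchanges, exactly as in the proof of Proposition~\ref{strichrac}. The only cosmetic difference is that the paper first reduces to $r=0$ (commuting $(1-\Delta_y)^{r/2}$ with the equation) whereas you carry the weights $(1+\lambda_j)^{r/2}$ through the argument directly; the two are equivalent.
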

\begin{proof}
It is sufficient to consider the case $r=0$.
The general case follows simply by a derivation of the equation with respect to
$y$ variables.
The proof of \eqref{roip} for $r=0$ is similar to the proof
of Proposition \ref{strichrac} by using Proposition \ref{above}
instead of \eqref{strichartzfree}.
\end{proof}
In the sequel we shall work with the following norm:
\begin{equation}\label{Xcubicevensec3}
\|u\|_{X_\epsilon}=\sum_{s=0}^{\frac{n-2}2} \sum_{|\alpha|=s}
\|\partial_x^\alpha (1-\Delta_{y})^{\frac{1}{2}(\frac k2+\epsilon)} 
u\|_{L^4_t L^{\frac{2n}{1+2s}}_xL^2_y}.
\end{equation}
\begin{prop}\label{multilinear}
Assume $n$ is even and $\epsilon>0$.  Then
there exists $C=C(n,\epsilon)>0$ such that for every $\beta\in\N^d$ satisfying  $|\beta|= \frac{n-2}{2}$,
every $u_1,u_2,u_3\in X_\epsilon$,
$$
\|\partial_x^{\beta} 
(1-\Delta_y)^{\frac{1}{2}(\frac k2+\epsilon)}
(u_1u_2u_3)\|_{L^{\frac 43}_t L^{\frac{2n}{n+1}}_x L^2_y} \leq C \|u_1\|_{X_\epsilon}\|u_2\|_{X_\epsilon}\|u_3\|_{X_\epsilon}.
$$
\end{prop}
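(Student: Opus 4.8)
The plan is to absorb the $y$-derivatives by the algebra property of Sobolev spaces on $M^k_y$ and then reduce everything to H\"older's inequality. Since $n$ is even, $|\beta|=\tfrac{n-2}{2}$ is a genuine multi-index order, so the classical Leibniz rule applies and gives
$$
\partial_x^\beta(u_1u_2u_3)=\sum_{\beta_1+\beta_2+\beta_3=\beta}c_{\beta_1,\beta_2,\beta_3}\,(\partial_x^{\beta_1}u_1)(\partial_x^{\beta_2}u_2)(\partial_x^{\beta_3}u_3),
$$
a finite sum in which $|\beta_1|+|\beta_2|+|\beta_3|=\tfrac{n-2}{2}$, hence each $|\beta_i|\in\{0,1,\dots,\tfrac{n-2}{2}\}$. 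It therefore suffices to estimate each summand separately.

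Fix such a decomposition. The operator $(1-\Delta_y)^{\frac12(\frac k2+\epsilon)}$ commutes with the $x$-derivatives, so I may work pointwise in $(t,x)$ with the function $y\mapsto (\partial_x^{\beta_1}u_1)(\partial_x^{\beta_2}u_2)(\partial_x^{\beta_3}u_3)$. Because $\epsilon>0$, the exponent $\tfrac k2+\epsilon$ exceeds half the dimension of $M^k_y$, so the Sobolev space $\{v:(1-\Delta_y)^{\frac12(\frac k2+\epsilon)}v\in L^2_y\}$ is a Banach algebra; iterating the two-factor product estimate yields, for a.e. $(t,x)$,
$$
\Big\|(1-\Delta_y)^{\frac12(\frac k2+\epsilon)}\big[(\partial_x^{\beta_1}u_1)(\partial_x^{\beta_2}u_2)(\partial_x^{\beta_3}u_3)\big]\Big\|_{L^2_y}\le C\prod_{i=1}^3\big\|\partial_x^{\beta_i}(1-\Delta_y)^{\frac12(\frac k2+\epsilon)}u_i\big\|_{L^2_y}.
$$
Introducing $g_i(t,x):=\big\|\partial_x^{\beta_i}(1-\Delta_y)^{\frac12(\frac k2+\epsilon)}u_i(t,x,\cdot)\big\|_{L^2_y}$, the left-hand side of the proposition is dominated by a finite sum of terms $\|g_1g_2g_3\|_{L^{4/3}_tL^{2n/(n+1)}_x}$.

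It remains to apply H\"older's inequality. In the $x$ variable I use the exponents $p_i=\tfrac{2n}{1+2|\beta_i|}$, which are admissible (each lies in $[1,\infty]$ since $1+2|\beta_i|\le n-1$) and satisfy the scaling relation
$$
\sum_{i=1}^3\frac1{p_i}=\frac{3+2(|\beta_1|+|\beta_2|+|\beta_3|)}{2n}=\frac{3+(n-2)}{2n}=\frac{n+1}{2n};
$$
in the $t$ variable I use the three exponents equal to $4$, since $\tfrac34=3\cdot\tfrac14$. This gives
$$
\|g_1g_2g_3\|_{L^{4/3}_tL^{2n/(n+1)}_x}\le\prod_{i=1}^3\|g_i\|_{L^4_tL^{p_i}_x}=\prod_{i=1}^3\big\|\partial_x^{\beta_i}(1-\Delta_y)^{\frac12(\frac k2+\epsilon)}u_i\big\|_{L^4_tL^{2n/(1+2|\beta_i|)}_xL^2_y},
$$
and since $|\beta_i|\in\{0,\dots,\tfrac{n-2}{2}\}$, each factor on the right is precisely one of the summands defining $\|u_i\|_{X_\epsilon}$, hence is $\le\|u_i\|_{X_\epsilon}$. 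Summing over the finitely many Leibniz terms yields the claim.

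The only genuinely non-elementary ingredient is the Banach algebra property of the $y$-Sobolev space, which is exactly why the extra $\epsilon$ appears in the definition of the spaces; everything else is the Leibniz rule (available thanks to $n$ even, cf.\ Remark \ref{s}) together with the observation that the exponents $\tfrac{2n}{1+2s}$, the $L^4_t$ integrability, and the order $|\beta|=\tfrac{n-2}{2}$ are arranged so that H\"older closes. Accordingly I expect the only obstacle to be bookkeeping: checking that the Leibniz rule never produces an $x$-derivative of order exceeding $\tfrac{n-2}{2}$ (automatic since $|\beta|=\tfrac{n-2}{2}$), so that each factor really is a term appearing in the $X_\epsilon$ norm.
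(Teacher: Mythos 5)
Your argument is correct and is essentially identical to the paper's own proof: Leibniz rule in $x$, the Banach algebra property of $H^{\frac k2+\epsilon}(M_y)$ applied pointwise in $(t,x)$, and H\"older with exponents $L^4_t L^{2n/(1+2|\beta_i|)}_x$ matched to the terms of the $X_\epsilon$ norm. Your bookkeeping is if anything slightly more careful (the paper's displayed H\"older step has a typo, writing $L^{4/3}_t$ on each factor where $L^4_t$ is meant, as you correctly use).
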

\begin{proof}
Notice that $\mathcal H^{\frac k2+\epsilon}_y$ is an algebra
and hence
\begin{equation}\label{algfin}
\|(1-\Delta_y)^{\frac{1}{2}(\frac k2+\epsilon)}
(f_1f_2f_3)\|_{L^2_y} 
\leq C\prod_{j=1}^{3} \|(1-\Delta_y)^{\frac{1}{2}(\frac k2+\epsilon)} f_j \|_{L^2_y}\,.
\end{equation}
Moreover, by the Leibnitz formula
\begin{equation}\label{leibnitz}
\partial^{\beta}_x (g_1g_2g_3)
=
\sum_{|\beta_1|+|\beta_2|+|\beta_3|=\frac{n-2}{2}}
c_{\beta_1 \beta_2 \beta_3} (\partial_x^{\beta_1} 
g_1\partial_x^{\beta_2} g_2\partial_x^{\beta_3} g_3)
\end{equation}
for a suitable choice of the coefficient $c_{\beta_1\beta_2\beta_3}$.
By combining \eqref{algfin}, \eqref{leibnitz} with the Minkowski inequality
it is sufficient to prove
$$
\left \|\prod_{j=1}^{3} \|(1-\Delta_y)^{\frac{1}{2}(\frac k2+\epsilon)}
\partial_{x}^{\beta_j} u_j\|_{L^2_y}
\right \|_{L^\frac 43_t L^{\frac{2n}{n+1}}_x}
\leq C\|u_1\|_{X_\epsilon}\|u_2\|_{X_\epsilon}\|u_3\|_{X_\epsilon}
$$
with $|\beta_1|+|\beta_2|+|\beta_3|=\frac{n-2}{2}.$
Using the relation
$$\frac{n+1}{2n}=\frac{1+2|\beta_1|}{2n}+\frac{1+2|\beta_2|}{2n}+\frac{1+2|\beta_3|}{2n}$$
and the H\"older inequality, applied with respect to $(t,x)$, we get
\begin{multline*}
\left \|\prod_{j=1}^{3} \|(1-\Delta_y)^{\frac{1}{2}(\frac k2+\epsilon)}
\partial_{x}^{\beta_j} u_j\|_{L^2_y}
\right \|_{L^\frac 43_t L^{\frac{2n}{n+1}}_x}
\leq 
\\
\prod_{j=1}^{3} \Big\|(1-\Delta_y)^{\frac{1}{2}(\frac k2+\epsilon)}
\partial_x^{\beta_j}  u_j\Big \|_{L^\frac 43_t L^{\frac{2n}{2|\beta_j|+1}}_xL^2_{y}}
\leq \|u_1\|_{X_\epsilon}\|u_2\|_{X_\epsilon}\|u_3\|_{X_\epsilon}\,.
\end{multline*}
This completes the proof of Proposition~\ref{multilinear}.
\end{proof}
{\bf Proof of Theorem \ref{cubic}}
The problem  \eqref{mixedcauchy} can be rewritten as the integral equation
\begin{equation}
u(t)= e^{{\bf i}t\Delta_{x,y}} f\pm \int_0^t e^{{\bf i}(t-s)\Delta_{x,y}}\Big(|u(s)|^2 u(s)\Big) ds\equiv T_{f}(u). 
\end{equation}
The proof of \eqref{scattering2} is standard once it is proved the existence of a global solution
$u(t,x,y)$ belonging to 
$$
Y_\epsilon=L^{\infty}_t{\mathcal H}^{\frac{n-2}{2}, \frac k2+\epsilon}_{x,y} \
\cap X_\epsilon
$$
and hence will be omitted (for more details on this fact see
\cite{cazenave}). By a fixed point argument it is sufficient to prove the following\\
\\
{\em Claim}:
$$\forall\, \epsilon\in (0,\infty) \hbox{ }
\exists\, \delta=\delta(\epsilon)>0 \hbox{ and } R=R(\epsilon)>0 \hbox{ s.t. }
T_f (Y_{\epsilon, R})\subset Y_{\epsilon, R}
$$$$\hbox{ and } T_f \hbox{ is a contraction on } Y_{\epsilon, R}
\hbox{ } \forall f \hbox{  s.t. } \|f\|_{{\mathcal H}^{\frac{n-2}2, 
\frac k2+\epsilon}_{x,y}}<\delta,$$\\
\\
where $Y_{\epsilon,R}=\{u\in Y_{\epsilon}|\|u\|_{Y_\epsilon}<R\}$.\\
By combining \eqref{roip}
with Proposition \ref{multilinear} we get:
$$\|T_f u\|_{Y_\epsilon}
\leq C (\|f\|_{{\mathcal H}^{\frac{n-2}2, 
\frac k2+\epsilon}_{x,y}} + 
\sum_{|\beta|=\frac{n-2}{2}}
\|\partial_x^{\beta}(1-\Delta_y)^{\frac{1}{2}(\frac k2+\epsilon)}
(u|u|^2)\|_{L^\frac 43_t  L^{\frac{2n}{n+1}}_x 
L^2_y})$$
$$\leq C (\|f\|_{{\mathcal H}^{\frac{n-2}2, 
\frac k2+\epsilon}_{x,y}} + \|u\|_{X_\epsilon}^3)$$
$$\leq C (\|f\|_{{\mathcal H}^{\frac{n-2}2, 
\frac k2+\epsilon}_{x,y}} + \|u\|_{Y_\epsilon}^3).$$
By a standard continuity argument the previous estimate
gives the existence of $\delta>0$ and $R(\delta)>0$ such that
$$T_f(Y_{\epsilon, R(\delta)})\subset Y_{\epsilon, R(\delta)}$$ provided that
$\|f\|_{{\mathcal H}^{\frac{n-2}2,\frac k2 +\epsilon}_{x,y}}<\delta$.
Moreover $\lim_{\delta\rightarrow 0} R(\delta)=0$.
Next we shall check that $T_f$ is a contraction on $Y_{\epsilon,R(\delta)}$
provided that $\delta$, and hence $R(\delta)$, are small.
By using 
\eqref{roip}
we get
$$\|T_f(v)-T_f(w)\|_{Y_\epsilon}
\leq C \sum_{|\beta|=\frac{n-2}{2}} \|\partial_x^{\beta}(1-\Delta_y)^{\frac{1}{2}(\frac k2+\epsilon)}(v|v|^2-w|w|^2)
\|_{L^\frac 43_t  L^{\frac{2n}{n+1}}_x 
L^2_y}$$
that in conjunction with the following identity
$$v^2\bar v - w^2\bar w=
(v-w)(v+w)\bar w+ v^2(\bar v-\bar w)$$
and with Proposition \ref{multilinear} gives
$$\|T_f(v)-T_f(w)\|_{Y_\epsilon}
\leq C \|v-w\|_{Y_\epsilon} 
(\|v\|_{Y_\epsilon}+ \|w\|_{Y_\epsilon})^2\leq
C \|v-w\|_{Y_\epsilon} (R(\delta))^2.$$
Hence $T_f$ is contraction on $Y_{\epsilon, R(\delta)}$
in case $R(\delta)$ is small enough.
\section{Proof of Theorem \ref{cubicodd}}\label{cubicoddsec}
Notice that if we split
$$\R^n_x\times M^k_y=\R^{n-1}_{\bar x}\times (\R_{x_n}\times M_y^k)$$
then we are reduced to the situation of Theorem
\ref{cubic} since $n-1$ is an even number.
However we are not allowed to apply directly
Theorem \ref{cubic} since the manifold
$\R_{x_n}\times M_y^k$ is not compact 
(despite to the assumption of Theorem
\ref{cubic}). To overcome this difficulty we shall prove 
the following version of Proposition~\ref{mixed}.
\begin{prop}\label{mixedodd}
Let $n\geq 3$ be odd, $\alpha\in \N^n$ such that $0\leq |\alpha|\leq \frac{n-3}{2}$ 
and $r\geq 0$, then
\begin{multline}\label{cauchyduhamel}
\|\partial^{\alpha}_{\bar x} 
(1-\partial_{x_n}^2-\Delta_{y})^{r/2}
e^{it\Delta_{x,y}} f
\|_{L^p_tL^q_{\bar x}L^2_{(x_n,y)}}
\\
+ \|\partial_{\bar x}^\alpha 
(1-\partial_{x_n}^2-\Delta_{y})^{r/2}
(\int_0^t e^{{\bf i}(t-\tau)\Delta_{x,y}} F(\tau) d\tau)
\|_{L^p_tL^q_{\bar x}L^2_{(x_n,y)}}
\\
\leq C\Big(\|f\|_{{\mathcal H}^{\frac{n-3}2,r}_{\bar x,(x_n,y)}}
+
\sum_{|\beta|=\frac{n-3}{2}}
\|\partial^{\beta}_{\bar x} (1-\partial_{x_n}^2-\Delta_{y})^{r/2} F
\|_{L^{\tilde p'}_t L^{\tilde q'}_{\bar x}L^2_{(x_n,y)}}\Big),
\end{multline}
where
$$
\frac 2p+\frac {n-1}q=1+|\alpha|,\quad 
\frac2{\tilde p}+\frac{n-1}{\tilde q}=\frac{n-1}2,\quad  2<p, \tilde p\leq \infty.
$$ 
\end{prop}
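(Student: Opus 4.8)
The plan is to reduce everything to Proposition~\ref{mixed} applied on the product space $\R^{n-1}_{\bar x}\times \widetilde M$, where the role of the ``compact manifold'' is played by $\widetilde M := \R_{x_n}\times M_y^k$. The only thing preventing a literal citation of Proposition~\ref{mixed} is that $\widetilde M$ is non-compact, which was used there solely to have a discrete orthonormal eigenbasis of $-\Delta_y$; but a quick inspection of the proofs of Proposition~\ref{strichrac} and Proposition~\ref{mixed} shows that discreteness is not essential. First I would re-run the argument of Proposition~\ref{strichrac} with the eigenfunction expansion $u=\sum_j u_j(t,\bar x)\varphi_j$ replaced by the partial Fourier transform / spectral resolution of the operator $-\partial_{x_n}^2-\Delta_y$ on $\widetilde M$: concretely, take the partial Fourier transform in $x_n$ and the eigenfunction expansion in $y$ simultaneously, writing $u(t,x,y)=\sum_j \int_{\R} \hat u_j(t,\bar x,\xi_n) e^{{\bf i}x_n\xi_n}\varphi_j(y)\,d\xi_n$. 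Each Fourier-in-$x_n$, eigenmode-in-$y$ component $\hat u_j(t,\bar x,\xi_n)$ solves the $(n-1)$-dimensional Schrödinger equation on $\R^{n-1}_{\bar x}$ with the real shift $m=-\xi_n^2-\lambda_j$, to which the $m$-uniform Strichartz estimate \eqref{strichartzfree} (and its differentiated version \eqref{uniformmstri}) applies with constants independent of $j$ and $\xi_n$. Squaring and summing in $j$ and integrating in $\xi_n$, then using Minkowski's inequality exactly as in Proposition~\ref{strichrac} (legitimate because $\max\{\tilde p',\tilde q'\}\le 2\le\min\{p,q\}$), and finally Plancherel in $(x_n,y)$, yields \eqref{cauchyduhamel} in the case $r=0$.

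For general $r\ge 0$ I would argue as in Proposition~\ref{mixed}: the operator $(1-\partial_{x_n}^2-\Delta_y)^{r/2}$ is a Fourier multiplier in $(x_n,y)$ that commutes with both $e^{{\bf i}t\Delta_{x,y}}$ and the Duhamel integral, and it commutes with $\partial_{\bar x}^\alpha$ and with all the spatial $L^p_{\bar x}$, $L^q_{\bar x}$ and temporal $L^p_t$ norms (which act only in the complementary variables), so one simply applies the $r=0$ estimate to $(1-\partial_{x_n}^2-\Delta_y)^{r/2}f$ and $(1-\partial_{x_n}^2-\Delta_y)^{r/2}F$ in place of $f$ and $F$. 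The scaling relations on $(p,q,\tilde p,\tilde q)$ are precisely those of Proposition~\ref{above}/\ref{mixed} with $n$ replaced by $n-1$, which matches the statement since $0\le|\alpha|\le\frac{n-3}{2}=\frac{(n-1)-2}{2}$ is an integer and $n-1$ is even.

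The one genuinely delicate point is the uniformity of the Strichartz constant over the continuum of shifts $m=-\xi_n^2-\lambda_j$ together with the passage from the $\ell^2_j L^p_tL^q_{\bar x}$ ordering (which is what summing the squared estimates directly gives) to the $L^p_tL^q_{\bar x}\ell^2_j L^2_{d\xi_n}$ ordering needed to recognize an $L^2_{(x_n,y)}$ norm via Plancherel. Both are handled exactly as in Section~2: the remodulation $e^{{\bf i}t(\Delta_{\bar x}+m)}=e^{{\bf i}tm}e^{{\bf i}t\Delta_{\bar x}}$ leaves every Strichartz norm unchanged, giving $m$-independence for free, and Minkowski's inequality moves the $\ell^2_j$ and $L^2_{d\xi_n}$ norms inside the $(t,\bar x)$ norms precisely because the exponents on the right are $\le 2$ and those on the left are $\ge 2$. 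Apart from this bookkeeping the proof is a routine transcription of the compact-manifold case, so I would keep it short and simply indicate the two modifications (Fourier transform in $x_n$ alongside the eigenexpansion in $y$; integration in $\xi_n$ alongside summation in $j$).
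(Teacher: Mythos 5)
Your proposal is correct and follows essentially the same route as the paper: partial Fourier transform in $x_n$ combined with the eigenfunction expansion in $y$, the $m$-uniform Strichartz estimate of Proposition~\ref{above} applied with shift $m=-\xi_n^2-\lambda_j$, then Minkowski and Plancherel, with the general $r\geq 0$ case obtained by commuting $(1-\partial_{x_n}^2-\Delta_y)^{r/2}$ through. No substantive differences.
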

\begin{proof}
It is sufficient to 
consider the case $r=0$. The general case follows by
a derivation with respect to the $y$ variables.
Let 
$$
u(t,\bar x,x_n,y)=e^{{\bf i}t\Delta_{x,y}} f +
\int_0^t e^{{\bf i}(t-\tau)\Delta_{x,y}} F(\tau) d\tau.
$$
Then
$$
{\bf i} \partial_t u+ \Delta_{\bar x} + \partial_{x_n}^2 u + \Delta_y u = F
$$
with
$$u(0,x,y)= f(x, y).$$
Next we introduce  the partial Fourier 
transform of $u, f, F$ with respect to the $x_n$ variable
$$\hat u(t,\bar x, \xi_n, y), \hat f(\bar x, \xi_n, y) 
\hbox{ and }\hat F(t,\bar x, \xi_n, y),$$ 
which satisfy
$${\bf i} \partial_t \hat u+ \Delta_{\bar x}\hat u 
-\xi_n^2 \hat u + \Delta_y \hat u 
= \hat F, \hbox{ } (t,\bar x, y)\in \R_t
\times \R^{n-1}_{\bar x}\times M_y^k$$
with
$$\hat u(0,\bar x,\xi_n, y)= \hat f(\bar x,\xi_n, y).$$
Next, we decompose
$$\hat u(t,\bar x, \xi_n, y), \hat f(\bar x, \xi_n, y) 
\hbox{ and }\hat F(t,\bar x, \xi_n, y) $$ 
with respect to the orthonormal basis $\{\varphi_j\}$ of $L^2(M_y)$
given by the eigenfunctons of $-\Delta_y$ 
(i.e. $-\Delta_y \varphi_j=\lambda_j \varphi_j.$)
Then we have
$$\hat u(t,\bar x, \xi_n, y)=\sum_{j} \hat u_j(t,\bar x, \xi_n) \varphi_j(y)$$
$$\hat F(t,\bar x, \xi_n, y)=\sum_{j} \hat F_j(t,\bar x, \xi_n) \varphi_j(y)$$
$$\hat f(\bar x, \xi_n, y)=\sum_{j} \hat f_j(\bar x, \xi_n) \varphi_j(y).$$
Moreover
$\hat u_j(t,\bar x, \xi_n), \hat f_j(\bar x, \xi_n) 
\hbox{ and }\hat F_j(t,\bar x, \xi_n) $
are related by the following Cauchy problems
\begin{equation}\label{doublefour}
{\bf i} \partial_t \hat u_j+ \Delta_{\bar x}\hat u_j 
- \xi_n^2 \hat u_j - \lambda_j \hat u_j = \hat F_j,
\hbox{ } (t, \bar x, y)\in \R_t\times \R^{n-1}_{\bar x}\times M_y^k
\end{equation}
with
$$\hat u(0,\bar x,\xi_n)= \hat f_j(\bar x,\xi_n).$$
Using Proposition~\ref{above} in the context of \eqref{doublefour} gives
$$ 
\|\partial_{\bar x}^s \hat u_j(t, \bar x, \xi_n)\|_{L^p_tL^q_{\bar x}}\leq 
C\|\hat f_j(\bar x,\xi_n)\|_{H^{\frac{n-3}{2}}_{\bar x}}
+C\sum_{|\beta|=\frac{n-3}{2}} \|\partial^{\beta}_{\bar x} \hat F_j(t, \bar x, \xi_n)
\|_{L^{\tilde p'}_t L^{\tilde q'}_{\bar x}}
$$
where $C=C(p, \tilde p, q, \tilde q)>0$ is constant uniform with respect to $j$ and $\xi_n$
and $p, \tilde p, q, \tilde q$ are as in the assumptions.
In particular we get
$$\|\partial_{\bar x}^s \hat u_j(t, \bar x, \xi_n)
\|_{L^2_{\xi_n}l^2_{j} L^p_tL^q_x}\leq 
C\|f\|_{{\mathcal H}^{\frac{n-3}2,r}_{\bar x,(x_n,y)}}+C\sum_{|\beta|=\frac{n-3}{2}} \|\partial^{\beta}_{\bar x} \hat F_j(t, \bar x, \xi_n)
\|_{L^2_{\xi_n}l^2_{j}L^{\tilde p'}_t L^{\tilde q'}_x}.$$
Again, we use that
$$
\max\{\tilde p', \tilde q'\}\leq 2\leq\min\{p, q\}
$$ 
and therefore the Minkowski inequality gives 
$$
\|\partial_{\bar x}^s \hat u_j(t, \bar x, \xi_n)
\|_{L^p_tL^q_xL^2_{\xi_n}l^2_{j}}
\leq 
C\|f\|_{{\mathcal H}^{\frac{n-3}2,r}_{\bar x,(x_n,y)}}+C\sum_{|\beta|=\frac{n-3}{2}} 
\|\partial^{\beta}_{\bar x} \hat F_j(t, \bar x, \xi_n)\|_{L^{\tilde p'}_t L^{\tilde q'}_x L^2_{\xi_n}l^2_{j}}.
$$
Now, the proof can be concluded by the Plancharel identity (with respect to $x_n$ and $y$) as we did in Proposition~\ref{strichrac}.
\end{proof}
The proof of Theorem \ref{cubicodd} is similar to the proof of Theorem~\ref{cubic} and involves
the following version of Proposition~\ref{multilinear}.
\begin{prop}\label{multil}
Let $n\geq 3$ be odd and $r>\frac{k+1}2$. Then we have the following trilinear estimate
$$
\sum_{|\beta|=\frac{n-3}{2}}
\|\partial_{\bar x}^{\beta}
(1-\partial_{x_n}^2-\Delta_{y})^{r/2}
(u_1 u_2 u_3)\|_{L^{\frac 43}_t L^{\frac{2(n-1)}{n}}_{\bar{x}} L^2_{(x_n,y)}} \leq C 
\|u_1\|_{X}\|u_2\|_{X}\|u_3\|_{X}
$$
where
$$\|u\|_{X}=\sum_{s=0}^{\frac{n-3}2}
\sum_{|\alpha|=s} 
\|\partial_{\bar x}^\alpha(1-\partial_{x_n}^2-\Delta_{y})^{r/2} u
\|_{L^4_t L^{\frac{2(n-1)}{1+2s}}_{\bar{x}}L^2_{(x_n,y)}}.$$
\end{prop}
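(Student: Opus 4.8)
The plan is to follow the proof of Proposition~\ref{multilinear} almost verbatim, the single structural change being that the Sobolev algebra over the compact manifold $M_y^k$ used there (namely $\mathcal H^{\frac k2+\epsilon}_y$) must be replaced by the Sobolev algebra attached to the \emph{non-compact} product $\R_{x_n}\times M_y^k$, whose dimension is $k+1$. Thus the first step is to establish the inequality
$$
\big\|(1-\partial_{x_n}^2-\Delta_y)^{r/2}(f_1f_2f_3)\big\|_{L^2_{(x_n,y)}}\leq C\prod_{j=1}^{3}\big\|(1-\partial_{x_n}^2-\Delta_y)^{r/2}f_j\big\|_{L^2_{(x_n,y)}},
$$
valid precisely because $r>\frac{k+1}2$, i.e. the fact that $H^r(\R_{x_n}\times M_y^k)$ is an algebra once $r$ exceeds half the dimension. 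One route is to take the partial Fourier transform in $x_n$ and expand in the eigenbasis $\{\varphi_j\}$ of $-\Delta_y$, so that $1-\partial_{x_n}^2-\Delta_y$ becomes multiplication by $1+\xi_n^2+\lambda_j$, and then run the classical Littlewood--Paley proof of the Sobolev algebra property (legitimate since $\R_{x_n}\times M_y^k$ has bounded geometry); alternatively one localizes in $x_n$ by a partition of unity and uses the compact case, or applies the fractional Leibniz rule together with $H^r\hookrightarrow L^\infty$. I expect this step --- standard, but a little more delicate over a non-compact base --- to be the only genuinely new ingredient compared with Proposition~\ref{multilinear}; everything afterwards is bookkeeping.

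Granting this, since $n$ is odd the exponent $\frac{n-3}2$ is a nonnegative integer, so the ordinary Leibniz formula
$$
\partial^{\beta}_{\bar x}(g_1g_2g_3)=\sum_{|\beta_1|+|\beta_2|+|\beta_3|=\frac{n-3}{2}}c_{\beta_1\beta_2\beta_3}\,\partial_{\bar x}^{\beta_1}g_1\,\partial_{\bar x}^{\beta_2}g_2\,\partial_{\bar x}^{\beta_3}g_3
$$
applies and commutes with $(1-\partial_{x_n}^2-\Delta_y)^{r/2}$. Applying the algebra inequality pointwise in $(t,\bar x)$ with $f_j=\partial_{\bar x}^{\beta_j}u_j(t,\bar x,\cdot,\cdot)$, summing the finitely many Leibniz terms by the triangle inequality, and taking the $L^{4/3}_tL^{\frac{2(n-1)}{n}}_{\bar x}$ norm, one reduces to proving
$$
\Big\|\prod_{j=1}^{3}\big\|(1-\partial_{x_n}^2-\Delta_y)^{r/2}\partial_{\bar x}^{\beta_j}u_j\big\|_{L^2_{(x_n,y)}}\Big\|_{L^{\frac43}_tL^{\frac{2(n-1)}{n}}_{\bar x}}\leq C\prod_{j=1}^3\|u_j\|_X
$$
for every triple of multi-indices with $|\beta_1|+|\beta_2|+|\beta_3|=\frac{n-3}2$.

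The last step is Hölder's inequality in $(t,\bar x)$: in $t$ one uses $\frac34=\frac14+\frac14+\frac14$, and in $\bar x$ the identity
$$
\frac{n}{2(n-1)}=\sum_{j=1}^{3}\frac{1+2|\beta_j|}{2(n-1)},
$$
which holds exactly because $\sum_j|\beta_j|=\frac{n-3}2$. This bounds the left-hand side by $\prod_{j=1}^{3}\big\|(1-\partial_{x_n}^2-\Delta_y)^{r/2}\partial_{\bar x}^{\beta_j}u_j\big\|_{L^4_tL^{\frac{2(n-1)}{1+2|\beta_j|}}_{\bar x}L^2_{(x_n,y)}}$, and each factor is at most $\|u_j\|_X$ by the definition of the norm $X$ (take $s=|\beta_j|$, which indeed satisfies $0\leq s\leq\frac{n-3}2$). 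This completes the argument.
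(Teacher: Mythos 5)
Your proposal is correct and follows exactly the route the paper intends: the paper's own proof of this proposition is simply the instruction to repeat the argument of Proposition~\ref{multilinear}, with the algebra $\mathcal H^{\frac k2+\epsilon}_y$ on the compact $k$-dimensional manifold replaced by the algebra $H^r(\R_{x_n}\times M_y^k)$ on the $(k+1)$-dimensional product, which is where the hypothesis $r>\frac{k+1}2$ enters. The Leibniz expansion in $\bar x$ and the H\"older bookkeeping with $\frac{n}{2(n-1)}=\sum_j\frac{1+2|\beta_j|}{2(n-1)}$ are exactly as in the even-dimensional case, so nothing further is needed.
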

\begin{proof}
See the proof of Proposition \ref{multilinear}.
\end{proof}
{\bf Proof of Theorem \ref{cubicodd}. }
It is similar to the proof
of Theorem \ref{cubic} provided that
Proposition 
\ref{mixedodd} and \ref{multil}
are used instead of Proposition \ref{mixed}
and \ref{multilinear}.


\begin{thebibliography}{10}

\bibitem{Bo}
J.~Bourgain, {\it Fourier transform restriction phenomena for certain lattice subsets and applications to nonlinear evolution equations},
GAFA 3 (1993) 107-156.
%
\bibitem{BGT}
N.~Burq,  P.~G\'erard, N.~Tzvetkov, 
{\it Strichartz inequalities and the nonlinear Schr\"odinger equation on compact manifolds}, Am. J. Math. 126 (2004) 569-605.
%
\bibitem{cazenave}
T.~ Cazenave, {\it Semilinear {S}chr\"odinger equations},
Courant Lecture Notes in Mathematics,
vol. 10, New York University Courant Institute of Mathematical
Sciences, New York, 2003.
%
\bibitem{CW}
T.~ Cazenave, F.~ Weissler, 
{\it The Cauchy problem for the critical nonlinear Schr\"odinger equation in $H\sp s$}, 
Nonlinear Anal. 14 (1990), 807-836. 
 %
 \bibitem{HTT2}
 S. Herr, D. Tataru, N. Tzvetkov,
{\it  Strichartz estimates for partially periodic solutions to Schr\"odinger equations in 4d and applications},
Preprint, http://arxiv.org/abs/1011.0591.
 %
 \bibitem{GV} J.~Ginibre, G.~Velo,
 {\it On a class of nonlinear Schr\"odinger equations}, J. Funct. Anal. 32 (1979), 1-71.
 %
 \bibitem{O}
T.~ Ozawa, 
{\it Long range scattering for nonlinear Schr\"odinger equations in one space dimension}, 
Comm. Math. Phys. 139 (1991), 479-493.
 %
 \bibitem{TT}
H.~Takaoka, N.~Tzvetkov,  
{\it On 2D nonlinear Schr\"odinger equations with data on ${\Bbb R}\times\Bbb T$}, 
J. Funct. Anal. 182 (2001), 427-442.
 %
 \bibitem{T}
Y.~Tsutsumi,
{\it $L\sp 2$-solutions for nonlinear Schr\"odinger equations and nonlinear groups}, 
Funkcial. Ekvac. 30 (1987), 115-125. 
 
 
\end{thebibliography}
\end{document}